\newcommand{\RO}{\operatorname{RO}}
\newcommand{\ro}{\mathsf{ro}}
\newcommand{\cl}{\operatorname{cl}}
\renewcommand{\epsilon}{\varepsilon}
\renewcommand{\int}{\operatorname{int}}
\renewcommand{\phi}{\varphi}
\newtheorem{thm}{Theorem}
\newtheorem{pro}[thm]{Proposition}
\newtheorem{lem}[thm]{Lemma}
\newtheorem{cor}[thm]{Corollary}
\newtheorem*{fac}{Fact}
\title{\textsc{The Niemytzki plane is $\varkappa$-metrizable}}
\subjclass[2010]{Primary: 54D15; Secondary: 54G20}
\keywords{Stratifiable space, $\varkappa$-metrizable space,  Niemytzki plane, Sorgenfrey plane}
\author{Wojciech Bielas}
\address{Institute of Mathematics, University of Silesia, ul. Bankowa 14, 40-007 Katowice}
\email{wojciech.bielas@us.edu.pl}
\email{andrzej.kucharski@us.edu.pl}
\email{szymon.plewik@us.edu.pl}
\author{Andrzej Kucharski}
\author{Szymon Plewik}
\begin{document}

\maketitle

\begin{abstract} We try to  explain the differences between the concepts of stratifiable space and $\varkappa$-metrizable space. In particular, we give a characterization of $\varkappa$-metrizable spaces  which is modelled on  Chigogidze's  characterization.  Moreover, we present a $\varkappa$-metric for  the Niemytzki plane, using the properties of the Euclidean metric.   \end{abstract} 

\section{Introduction} 
The purpose of this paper is to present elementary or alternative proofs of some facts about  $\varkappa$-metrizable spaces. Our approach is focused on completely regular spaces, as it was intended by Shchepin, compare \cite[p. 164]{s76}. The class of $\varkappa$-metrizable spaces was introduced by   Shchepin  as an axiomatic theory based on four axioms which have been denoted by
(K1)--(K4). This class is a generalization of metric spaces and it is wide enough to contain many important classes of spaces that are not metrizable, see \cite{s76} and \cite{shc}. We analyse the relationships between axioms of a $\varkappa$-metric. To emphasize our  motivations, let us quote Sierpi\'nski's book \cite{sie}. 
\begin{quotation}
	\textit{The theorems of any geometry (e.g. Euclidean) follow, as is well known, from a number of axioms, i.e. hypotheses about the space considered, and from accepted definitions. A given theorem may be a consequence of some of the axioms and may not require all of them.}
\end{quotation}
 As a by-product, we get a  class of spaces which  we call $\ro$-stratifiable.
 We could not find  a publication in which $\ro$-stratifiable spaces are examined. We deal with the Niemytzki plane. As will be shown, this case  indicates that certain  properties of the Euclidean metric are crucial in a more general  setting.

 Our notations are standard, following \cite{en} or \cite{ss}. In spite of that, an open subset $U$ of a topological space $ X$ is called \textit{{regular open}} whenever it is the interior of a closed set: in  other words $U$ is a regular open set whenever $U=\int_X\cl_X(U); \text{\; or in brief \; }  U=\int\cl U.$  We   denote the family of all regular open subsets of $X$ by $\RO(X)$.  The complement of a regular open set is called  \textit{regular closed}.  So,  $F\subseteq X$ is a regular closed set whenever $F=\cl\int F.$ A subset $G$ of a topological spaces $X$ is a \textit{co-zero} set whenever there exists a continuous function $f\colon X \to [0,1]$ such that $G= f^{-1}((0,1])$. 

Let  $X$ be a topological space. Fix  a family  $\mathcal{B}$ which consists of open sets  and  a  family of  functions  $\{f_U\colon U\in\mathcal{B} \}$, where $f_U: X\to   [0,1]$. Consider the following conditions. 
\begin{itemize}
\item[(1)] If  $U\in\mathcal{B}$, then $U=f_U^{-1}((0,1]).$
\item[(2)]If $U,V\in\mathcal{B}$ and $U\subseteq V$, then $f_U(x)\leqslant f_V(x)$ for any $x\in X$.
\item[(3)] For any $U\in\mathcal{B}$, the function $f_U: X \to [0,1]$ is  continuous. \end{itemize}

According to  \cite[Theorem 5.2]{bor} a $T_1$-space $X$ is \textit{stratifiable} whenever it fulfils  conditions (1)--(3)  for $\mathcal{B}$ consisting of all open sets. 
Let us add that   Ceder's $M_3$-spaces were given a new name  \textit{stratifiable spaces} in the paper \cite{bor}, compare \cite{ced}. 
Following  Shchepin (see \cite[p. 164]{s76}, compare \cite[p. 407]{shc})  a completely regular space $X$ is called \textit{$\varkappa$-metrizable} whenever there exists a family of continuous  functions $\{f_U\colon U \in \RO (X)\}$ which satisfies conditions (1)--(3) and the following condition.
\begin{itemize}
\item[(4)] Let $(U^\alpha)$  be a decreasing sequence of {regular open} sets. If  $$W= {\int \bigcap_\alpha U^\alpha},$$ then  $ f_W(x) =\inf_\alpha f_{U^\alpha} (x), $ for any $x\in X.$ \end{itemize}   
Introducing the concept of a $\varkappa$-metrizable space,  Shchepin utilized regular closed sets. His axioms (K1)--(K4) for $\varkappa$-metric, see \cite[p. 164]{shc}, are direct translations, via de Morgan's laws,  of our conditions (1)--(4). Nonetheless, a family of continuous  functions $\{f_U\colon U \in \RO (X)\}$ which satisfies conditions (1)--(4) we call  a $\varkappa$-\textit{metric}. But a $T_0$-space with a $\varkappa$-metric and such that the family $\RO(X)$ constitutes a base  is called $\varkappa$-\textit{metrizable}. This definition is equivalent to Shchepin's one. To see this, let   
  $\mathcal{B}$ be a base for  a $T_0$-space $X$  and let a family $\{f_U\colon U\in \mathcal{B} \}$    fulfil  conditions $(1)$ and $(3)$. 
	Suppose $x$ and $y$ are different points of $X$. If  {$U$ in $\mathcal{B}$} is such that  $x\in U$ and $y\notin U$, then $$ \textstyle x\in f_U^{-1} ((\frac{f_U(x)}{2}, 1]) \mbox{ and }  y\in f_U^{-1} ([0, \frac{f_U(x)}{2})), $$ which shows that  $X$ is a Hausdorff space because {of continuity of $f_U$}. Thus,   $X$ is  completely regular, being a $T_1$-space with the  base $\{U \colon U\in \RO(X) \}$ which consists of co-zero sets.

	The paper is organized as follows. Above, we have provided conditions equivalent to the definition of a $\varkappa$-metrizable space.
	In the second part we introduce the concepts: $\mathcal{B}$-stratification, $\ro$-stratifiable and $\mathcal{B}$-approximation. Propositions \ref{p1}  and \ref{pr3} thoroughly explain the relationships between these concepts. We show the reason why the Sorgenfrey line is 
	not stratifiable, being  $\varkappa$-metrizable. Also, we show that the double arrow space is $\ro$-stratifiable, but not $\varkappa$-metrizable,  see ends of parts 2 and 3.  This indicates that condition $(4)$ is independent of conditions (1)--(3). 
Propositions  \ref{l1} and \ref{l2}  establish a characterization of a $\varkappa$-metrizable space. In the last part we discuss the properties of the Niemytzki plane.

 \section{$\mathcal{B}$-approximations and $\ro$-stratifiable spaces }
 If $X$ is a $T_0$-space and  a  family of  functions $\{f_U\colon  U\in\mathcal{B}\}$   fulfils  conditions (1)--(3), where  $f_U\colon X \to [0,1]$ for all $U$ in $\mathcal{B}$, then  we will call this family a \textit{$\mathcal{B}$-stratification}. If there exists a $\mathcal{B}$-stratification, then  the space $X$ is said to be \textit{$\mathcal{B}$-stratifiable}. Clearly, if $\mathcal{A} \subseteq \mathcal{B}$ and a space  is $\mathcal{B}$-stratifiable, then it is also $\mathcal{A}$-stratifiable.  If $\mathcal{B} = \RO(X)$, then we will  say  that $ X $  is  \textit{$\ro$-stratifiable}  instead of   
$ \RO(X)$-stratifiable. 
If a space $X$ is $\ro$-stratifiable,  then  any regular open set of $X$ is  a co-zero set by conditions (1) and $(3)$. Moreover, if conditions (1) and $(3)$ are fulfilled for $\mathcal{B}=\RO(X)$, then the space $X$ is $\varkappa$-normal: recall that a completely regular space is  \textit{$\varkappa$-normal} whenever any pair of non-empty disjoint  and regular closed sets can be separated by disjoint open sets, see \cite{shc}, compare \cite{bp}. Thus, if $F$ and $G$ are disjoint regular closed sets, then $F=f^{-1}(0)$ and $G=g^{-1}(0)$, where functions $f, g \colon X \to [0,1]$ are continuous. Then preimages of $[0, \frac12)$ and $(\frac12,1]$ via the continuous function 
 $\frac{f}{f+g}$ separate $F$ and $G$.  Under the additional assumption that each regular closed subset of $X$ is a $G_\delta$ set the reverse is  true,  which can be checked by modifying a proof of   Urysohn's lemma. This additional assumption is necessary as shown below.

There are compact Hausdorff  spaces which are not $\ro$-stratifiable. For instance, a compact Hausdorff space, containing a regular open subset which is not a co-zero set. Obviously, any such space   is $\varkappa$-normal, being a normal space. To see an example, let us consider  $$\textstyle   Y=\{\alpha\colon \alpha \leqslant \omega_1\} \cup \{\frac1n\colon n >0\}$$ and  the linear order $(Y,<)$ such that it is  the restriction of the well order of the ordinals on $\{\alpha\colon \alpha \leqslant \omega_1\}$ and  it  inherits the order  from the real line on $\{\frac1n\colon n >0\}$ and if $\alpha \leqslant \omega_1$ and $n>0$, then $\alpha < \frac1n$. The linear topology on $Y$ which is generated by $<$ is  compact and Hausdorff. In this topology,  there are regular open sets which are not co-zero sets, for example  $\{\alpha\colon \alpha < \omega_1 \} $.

In fact, the above reasoning does not use condition (2).  Let us add that there are many results about $\varkappa$-normal spaces, for example compare \cite{kal}. Also, there exist many examples of a completely regular space which is not $\varkappa$-normal, e.g., the ones which can be built using  a technique called the
Jones' machine, compare \cite{kp} or \cite{bp}. 

It was noted in \cite[pp. 106--107]{ced} that the Sorgenfrey line $\mathbb S$, i.e., the real line with a topology generated by intervals $[a,b)$,  is not stratifiable, being a paracompact and perfectly normal space: in other words,  if $\mathcal{B}$ is the family of all open sets in $\mathbb S$, then no  family of functions fulfils conditions (1)--(3) with respect to $\mathcal{B}$. 
Nonetheless, the family  consisting of characteristic functions of closed-open sets of $\mathbb S$  fulfils  conditions (1)--(3).  Proposition \ref{pr1} will make clear these  facts, using  the union of the family of all left closed interval  and the family of all open intervals with rational end-points. 
\begin{pro}\label{pr1} If  $ \textstyle \mathcal A= \{[x,y): x<y\} \cup\{(x,y): x,y \in \mathbb Q\}, $ then the Sorgenfrey line  is not $ \mathcal A$-stratifiable.
\end{pro}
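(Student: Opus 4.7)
I would assume for contradiction that a family $\{f_U\colon U\in\mathcal{A}\}$ satisfies (1)--(3). The key preliminary remark is that since $\{[a,b)\colon a<b\}$ is a base of $\mathbb{S}$, a function $f\colon\mathbb{S}\to\mathbb{R}$ is Sorgenfrey-continuous if and only if it is right-continuous at every point in the Euclidean sense. Thus (1) forces $f_{(p,q)}(p)=0$ for any rational $p<q$, and (3) then yields $f_{(p,q)}(t)\to 0$ as $t\to p^{+}$ in the usual sense; whereas for the clopen $W=[0,1)\in\mathcal{A}$, (1) gives $c:=f_{W}(0)>0$ and right-continuity at $0$ provides $\eta>0$ with $f_{W}(t)>c/2$ for every $t\in[0,\eta)$.

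The core of the argument would be to combine these facts with (2). For each rational $p<0$ the inclusion $W\subseteq(p,1)$ and (2) give $f_{(p,1)}\geq f_{W}$, hence $f_{(p,1)}(t)>c/2$ on $[0,\eta)$. On the other hand, $f_{(p,1)}(p)=0$ together with its right-continuity at $p$ produce some $\delta_{p}>0$ with $f_{(p,1)}(t)<c/4$ on $[p,p+\delta_{p})$. Disjointness of these two intervals (otherwise a single $t^{*}$ would satisfy both $f_{(p,1)}(t^{*})<c/4$ and $f_{(p,1)}(t^{*})>c/2$) then forces $\delta_{p}\leq|p|$.

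The final step, which I expect to be the main obstacle, is to turn the per-$p$ bound $\delta_{p}\leq|p|$ into a genuine contradiction---since in isolation each $f_{(p,1)}$ with arbitrarily steep decay at $p$ is perfectly allowable. I would force the issue by sliding to an irrational point $\alpha\in(-1,0)$: the clopen $[\alpha,1)$ belongs to $\mathcal{A}$, but no $(\alpha,q)\in\mathcal{A}$ is available because $\alpha\notin\mathbb{Q}$. Taking rationals $p_{n}\uparrow\alpha$ and using (2) to transmit the strict lower bound $f_{[\alpha,1)}(\alpha)>0$ to every $f_{(p_{n},1)}(\alpha)$, while (2) also enforces the monotone chain $f_{(p_{n+1},1)}\leq f_{(p_{n},1)}$, the plan is to track the right-continuity moduli $\delta_{p_{n}}$ along this nested sequence in order to exhibit either a value of some $f_{U}$ outside $[0,1]$ or a point at which (3) must fail. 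The feature being exploited is precisely the asymmetry between rational and irrational left endpoints in $\mathcal{A}$: at an irrational $\alpha$ no member of $\mathcal{A}$ is available to ``mediate'' the monotonicity constraints the way $(\alpha,q)$ would at a rational point, and this is exactly why the family $\mathcal{A}$ was defined as above.
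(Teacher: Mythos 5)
There is a genuine gap, and you have in fact located it yourself: the step you call ``the main obstacle'' is the entire content of the proposition, and the route you sketch for it cannot work. Everything up to the bound $\delta_p\leqslant|p|$ is correct but vacuous: a function that vanishes at $p$ and is right-continuous there may rise arbitrarily steeply on $(p,p+\delta_p)$, so this bound constrains nothing. More seriously, your final plan only ever invokes countably many members of $\mathcal A$ (the intervals $(p_n,1)$ with $p_n\in\mathbb Q$, together with $[0,1)$ and one $[\alpha,1)$), and that configuration admits a perfectly consistent stratification: for instance $f_{(p,1)}(t)=\min\bigl(1,\max(0,\tfrac{t-p}{\alpha-p}),\max(0,1-t)\bigr)$ is Euclidean-continuous, takes the constant value $1-\alpha>0$ at $\alpha$ for every $p<\alpha$, vanishes at $p$, and satisfies the monotonicity $f_{(p_{n+1},1)}\leqslant f_{(p_n,1)}$ required by (2). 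So no contradiction can be extracted from the sets you have chosen; the rational/irrational asymmetry you want to exploit is not where the obstruction lives.

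The missing idea is a uniformization over the \emph{uncountable} subfamily $\{[x,x+1)\colon x\in\mathbb R\}$ of $\mathcal A$. The paper's proof sets $R_n=(a,a+2)\cap\{x\colon f_{[x,x+1)}(x)>\tfrac1n\}$; condition (1) gives $(a,a+1)\subseteq\bigcup_n R_n$, so the Baire category theorem (in the Euclidean topology) produces a single $n$ and an interval on which $R_n$ is dense. One then picks a \emph{rational} $x$ in that interval and a decreasing sequence $x_k\downarrow x$ with $x_k\in R_n$; since $[x_k,x_k+1)\subseteq(x,a+2)\in\mathcal A$, condition (2) forces $f_{(x,a+2)}(x_k)\geqslant f_{[x_k,x_k+1)}(x_k)>\tfrac1n$, while condition (1) gives $f_{(x,a+2)}(x)=0$ because $x\notin(x,a+2)$. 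As $x_k\downarrow x$ is Sorgenfrey convergence, this contradicts the right-continuity of $f_{(x,a+2)}$ at $x$ --- exactly the right-continuity you correctly identified, but applied at a point produced by the category argument rather than at a preselected one. Without some such uniform lower bound valid on a set with a right-sided accumulation point, the argument cannot close.
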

\begin{proof} Suppose that a family $\{f_U: U \in  \mathcal A\}$ is an $\mathcal{A}$-stratification, i.e., it  witnesses that $\mathbb S$ is $\mathcal{A}$-stratifiable.  If $(a,a+2)\in \mathcal{A}$ and  $n>0$, then  put $$ \textstyle R_{n}= (a,a+2) \cap \{x\colon f_{[x, x+1)}(x) > \frac1n \}. $$ Since $(a,a+1) \subseteq \bigcup \{R_{n}\colon n>0\}$,  using  the Baire category theorem,  choose  $n$ such that $$(a,a+1) \cap \int \cl R_{n} \not= \emptyset,$$ where the interior and the closure are taken with respect to the Euclidean topology. Next, choose a decreasing sequence $(x_k)$ converging to the  rational number $x$ such that  $x_k \in(a,a+1) \cap R_{n}$. 
Thus each number $1+x_k \in (x,a+2)$, so by   condition $(2)$, we obtain  
$$\textstyle f_{(x,a+2)}(x_k)  \geqslant f_{[x_k,x_k+1)}(x_k)> \frac1n.  $$ 
Since $x\notin (x,a+2)$, by   condition $(1)$, we obtain  $f_{(x,a+2)}(x)=0$, which  contradicts    the continuity of   $f_{(x,a+2)}$.  
\end{proof}

It is known that the Sorgenfrey line $\mathbb S$ is a $\varkappa$-metrizable space, compare \cite[p. 507]{stt}.  Therefore  the space   $\mathbb S$ is $\ro$-stratifiable. We present an alternative proof, using the sequential criterion for the continuity of a function.   If $U$ is a  {regular open} subset of $\mathbb S$, then put $$f_U(x) = \begin{cases} \sup\{q-x\colon [x,q) \subseteq U \cap [x,x+1)\},&\text{ when } x \in U;\\ 0, & \text{ when } x \notin U.     \end{cases}$$   By 
the definition, the family $\{f_U\colon U \in \RO(\mathbb S)\}$ fulfils  conditions  $(1)$ and $(2)$. To verify   condition $(3)$, we shall check that each function $f_U\colon\mathbb S \to [0,1]$ is continuous. Indeed, suppose that a sequence $(x_n)$ is convergent to $x$. Since we consider convergence in $\mathbb S$, we can assume that always $x\leqslant x_n$. Thus, if $x\in U$, then by the definition of $f_U$, the sequence $(f_U(x_n))$ converges to $f_U(x)$. But if  $x\notin U$, then take a decreasing sequence $(y_n)$ converging to $x$ such that  $y_n \notin U$ and $x_n <y_n$ which is possible since $U$ is {regular open} in $\mathbb S$.  Then, again using the definition of $f_U$ we check that $f_U(x_n) \leqslant y_n -x$ which implies that the sequence $(f_U(x_n))$ converges to $0=f_U(x)$.

Now, we will slightly modify the definition of a stratifiable  space which  was proposed in \cite[p. 1]{bor}. Let $\mathbb I = (0,1)\cap\mathbb Q$ be the set of all rational numbers from the open unit interval.  Let us assume that a family $\{U_q\colon q\in \mathbb I \} $, consisting of open sets, is assigned to a  set $U\in\mathcal{B}$.
A collection of such families  $\{U_q\colon q\in \mathbb I \} $ will be called a $\mathcal{B}$-\textit{approximation} if the following three conditions  are fulfilled.

\begin{itemize}
\item[$(a)$] If $U\in\mathcal{B}$, then $U=\bigcup \{U_q\colon q\in \mathbb I\}$.
\item[$(b)$] If $U,V\in\mathcal{B}$  and $U\subseteq V$, then $U_q\subseteq V_q$.
\item[$(c)$] If $U\in\mathcal{B}$ and   $p<q$, then $ \cl (U_q)  \subseteq U_p$.
\end{itemize}

Any $\RO(X)$-approximation about which it is assumed that the indexed sets are  open, can be improved to an  $\RO(X)$-approximation with all indexed sets being regular open, for example by  the substitution $ U_q \mapsto \int \cl (U_q) $. 
The following propositions establish a connection between $\mathcal{B}$-approximations and $\mathcal{B}$-stratifications.

\begin{pro}\label{p1} If  a family $\{f_U\colon U\in\mathcal{B} \}$ is a $\mathcal{B}$-stratification, then the assignment  \, $U_q= f_U^{-1}((q,1])$, where $U \in \mathcal{B}$ and $q\in \mathbb I$,  establishes a $\mathcal{B}$-approximation.  \end{pro}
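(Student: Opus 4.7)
The plan is to verify the three defining properties $(a)$, $(b)$, $(c)$ of a $\mathcal{B}$-approximation directly from the corresponding conditions $(1)$, $(2)$, $(3)$ satisfied by the family $\{f_U\colon U\in\mathcal{B}\}$. First I would observe that, since each $f_U$ is continuous by $(3)$ and $(q,1]$ is open in $[0,1]$, every set $U_q=f_U^{-1}((q,1])$ is automatically open, so the assignment produces families of open sets as required by the definition of a $\mathcal{B}$-approximation.

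For $(a)$, I would use condition $(1)$: since $U=f_U^{-1}((0,1])$, a point $x$ lies in $U$ exactly when $f_U(x)>0$. The density of $\mathbb{I}$ in $(0,1)$ then gives a rational $q\in\mathbb{I}$ with $0<q<f_U(x)$, so $x\in U_q$; conversely $U_q\subseteq U$ is immediate. For $(b)$, condition $(2)$ says $f_U\leqslant f_V$ pointwise whenever $U\subseteq V$, and the inclusion $U_q\subseteq V_q$ follows at once by comparing the defining inequalities.

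For $(c)$, the idea is to interpose the closed set $f_U^{-1}([q,1])$ between $U_q$ and $U_p$. Continuity of $f_U$ ensures that $f_U^{-1}([q,1])$ is closed, and since it contains $U_q=f_U^{-1}((q,1])$, it also contains $\cl(U_q)$. On the other hand, $p<q$ implies $[q,1]\subseteq(p,1]$, so $f_U^{-1}([q,1])\subseteq f_U^{-1}((p,1])=U_p$, and the chain of inclusions yields $\cl(U_q)\subseteq U_p$.

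I do not anticipate any serious obstacle; the entire argument is a transcription of the three axioms for a $\mathcal{B}$-stratification into the three axioms for a $\mathcal{B}$-approximation, with the only non-trivial point being the sandwich argument in $(c)$ that uses continuity to force the closure of a sublevel-style preimage into a slightly larger open preimage.
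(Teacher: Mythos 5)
Your argument is correct and coincides with the paper's proof: openness of $U_q$ from continuity, $(a)$ and $(b)$ read off directly from $(1)$ and $(2)$, and the same sandwich $\cl(U_q)\subseteq f_U^{-1}([q,1])\subseteq U_p$ for $(c)$. No differences worth noting.
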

\begin{proof} Sets $U_q$ are open, since each  $f_U$ is a continuous function.   By the definition of $f_U$,  conditions $(1)$ and  $(a)$ are equivalent. For the same reasons, conditions $(2)$ and  $(b)$ are equivalent. If $p<q $, then we have $$ \cl (U_q) \subseteq f^{-1}_U([q,1]) \subseteq U_p,$$ since $f_U$ is a continuous function.\end{proof}

\begin{pro}\label{pr3} If  a collection of families  $\{U_q\colon q\in \mathbb I \} $ is  a $\mathcal{B}$-approximation, then the family $\{f_U\colon U\in\mathcal{B} \}$, where $$f_U(x) = \begin{cases} \sup\{q \in \mathbb I\colon x \in U_q\},& \text{ when } x \in U;\\ 0, & \text{ when } x \notin U,     \end{cases}$$  is a $\mathcal{B}$-stratification.   \end{pro}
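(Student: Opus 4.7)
The plan is to verify conditions (1), (2), and (3) in turn for the family $\{f_U\colon U \in \mathcal{B}\}$ defined in the statement, using the three clauses $(a)$, $(b)$, $(c)$ of a $\mathcal{B}$-approximation respectively (with $(c)$ doing the heavy lifting for continuity).

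Conditions (1) and (2) should follow essentially from the definition. For (1), if $x \in U$ then by $(a)$ some $U_q$ contains $x$, so $f_U(x) \geqslant q > 0$; if $x \notin U$, then $f_U(x) = 0$ by fiat. For (2), if $U \subseteq V$ in $\mathcal{B}$ and $x \in U$, then $(b)$ gives $\{q \in \mathbb{I} \colon x \in U_q\} \subseteq \{q \in \mathbb{I} \colon x \in V_q\}$, so the sups compare correctly; the cases $x \in V \setminus U$ and $x \notin V$ are handled by the $0$ value.

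The main work is (3), continuity of $f_U\colon X \to [0,1]$. I would check that preimages of subbasic open sets are open. For the upper half,
\[
f_U^{-1}((a,1]) = \bigcup\{U_q \colon q \in \mathbb{I},\ q > a\},
\]
a union of open sets (one uses $(a)$ again to absorb the case $x \notin U$ into the empty contribution). The nontrivial half is $f_U^{-1}([0,b))$. Given $x$ with $f_U(x) < b$, I would choose rationals $p, q \in \mathbb{I}$ with $f_U(x) < p < q < b$. Because $p$ strictly exceeds the defining supremum (or because $x \notin U$), we have $x \notin U_p$. Now invoke $(c)$: $\cl(U_q) \subseteq U_p$, so $x$ lies in the open set $X \setminus \cl(U_q)$. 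For any $y$ in this neighbourhood and any rational $r \geqslant q$, condition $(c)$ (applied for each $r' \in (q,r) \cap \mathbb{Q}$) forces $U_r \subseteq U_q \subseteq \cl(U_q)$, hence $y \notin U_r$, so $f_U(y) \leqslant q < b$.

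The obstacle I expect is precisely this last step. One cannot simply use a single intermediate rational $q$, since $\{x \colon x \notin U_q\}$ need not be open. The trick is to interpose the second rational $p$, so that $(c)$ upgrades the statement ``$x \notin U_p$'' into the statement ``$x \notin \cl(U_q)$'', which does yield an open neighbourhood. A minor bookkeeping point is that the rationals $p, q$ must be chosen in $\mathbb{I} = (0,1) \cap \mathbb{Q}$, which is fine since $f_U(x) < b \leqslant 1$, and the boundary case $b \leqslant 0$ is trivial.
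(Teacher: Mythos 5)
Your proposal is correct and follows essentially the same route as the paper: conditions (1) and (2) read off from $(a)$ and $(b)$, lower semi-continuity from $f_U^{-1}((a,1])=\bigcup\{U_q\colon q>a\}$, and upper semi-continuity by interposing two rationals so that $(c)$ upgrades ``$x\notin U_p$'' to membership in the open set $X\setminus\cl(U_q)$. (The parenthetical appeal to intermediate rationals $r'\in(q,r)$ is unnecessary, since $(c)$ applied to the pair $q<r$ already gives $U_r\subseteq\cl(U_r)\subseteq U_q$, but this is harmless.)
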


\begin{proof} Clearly,   $(b)$ implies $(2)$. 
Each function $f_U$ is upper semi-continuous, since  $$ f_U^{-1} ([0,q)) = \bigcup\{X \setminus \cl (U_p)\colon p<q\}.$$ Indeed, if $f_U(x) <q$, then take $p_1, p_2 \in \mathbb I$ such that $f_U(x)<p_1<p_2 <q$. Since  $(c)$, we have $x\notin U_{p_1} \supseteq \cl( U_{p_2})$. But when $p<q$ and $x \notin \cl( U_p)$, we have $x\notin U_p$. Again by 
$(c)$ and the definition of $f_U$,  check that  $f_U(x) \leqslant p$.  
Each function $f_U$ is lower  semi-continuous, since  $$ f_U^{-1} ((q,1]) = \bigcup\{U_p\colon p>q\}.$$ Indeed, if $f_U(x) >q$, then, by the definition of $f_U$,  there exists  $p>q$ such that $x \in U_p.$  But when $ x \in U_p$ and $p>q$, then  $f_U(x) \geqslant p >q.$ We have showed that any function $f_U$ is continuous, whenever $U\in \mathcal{B}$.  
Obviously, $U=\bigcup \{U_q\colon q\in \mathbb I\}$ implies $U=f_U^{-1}((0,1]).$    
 \end{proof} 

 Borges \cite[Theorem 5.2]{bor} characterized a stratifiable space as a space with a $\mathcal{B}$-stratification, where $\mathcal{B}$ consists of all non-empty open sets. So, propositions \ref{p1} and \ref{pr3} establish another characterization of stratifiable spaces. 

Consider the lexicographic order on $\mathbb D = [0,1] \times \{0,1\}$. Note that  $\mathbb D$ with the order topology is well known as the double arrow space or the two arrows space. Observe that regular open subsets of $\mathbb D$ are union of pairwise disjoint closed-open intervals. Indeed, if $U \subseteq \mathbb D$ is an open set, then let $U_x$ be the union of all open interval $I \subseteq U$ such that $x\in I$. If $U$ is regular open, then sets $U_x$ are closed open and constitute a partition of $U$. Since $U_x= ((a,0), (b,1))= [(a,1), (b,0)]$ for each $x\in U$,  put $$f_U(x)=\begin{cases} 0,& \mbox{ when } x\notin U;\\
b-a, & \mbox{ when } x \in U_x = [(a,1), (b,0)];\\ 1, & \mbox{ when } x\in U\cap \{(0,0), (1,1)\}.\end{cases}$$  Then,  check that the family $\{f_U\colon U \in \RO(\mathbb D) \}$ witnesses that the double arrow space is $\ro$-stratifiable. 

\section{On $\varkappa$-metrizable spaces}

The notion of a $\varkappa$-metrizable space (a $\varkappa$-metric space) was introduced by  Shchepin, see \cite{s76}, compare \cite{shc}. In   \cite{chi},  Chigogidze gave a characterization of $\varkappa$-metrizable spaces, but as it was noted in \cite{wic}: \textit{This article is an announcement of results; proofs are not included}. So,  we propose a slight modification of the characterization from the paper \cite{chi}. Assume that a space $X$ is  completely regular and $\ro$-stratifiable. Fix an $\RO(X)$-stratification $\{f_U\colon U \in \RO(X) \}$. Let   $\{\{U_q\colon q\in \mathbb I \}\colon U \in \RO(X)\}$ be the  $\RO(X)$-approximation assigned via the rule $U_q = f_U^{-1}((q,1])$. Then consider the following conditions, where a sequence  $(U^\alpha)$ may be transfinite. 
\begin{itemize}
	\item[$(d)$] If $(U^\alpha)$ is a decreasing sequence of {regular open} sets and $p<q$, then $$  \bigcap_\alpha\cl( U^\alpha_q)\subseteq(\int \bigcap_\alpha U^\alpha)_p.
  $$
 \end{itemize}

 Because of \cite[Theorem 18]{shc} the double arrow space $\mathbb D$, being compact with countable character and with the weight continuum, is not $\varkappa$-metrizable. Thus, the class of all $\ro$-stratifiable spaces is essentially wider than the class of all 
$\varkappa$-metrizable spaces.

\begin{pro}\label{l1}  If the $\RO(X)$-approximation 
	 $\{\{U_q\colon q\in \mathbb I \}\colon U \in \RO(X)\}$ fulfils  condition $(d)$, then the $\RO(X)$-stratification  $\{f_U\colon U\in \RO(X)\}$  fulfils   condition {\upshape(4)}.
\end{pro}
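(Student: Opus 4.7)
The plan is to verify condition (4) by proving two inequalities. Write $W = \int \bigcap_\alpha U^\alpha$ and note that $W \subseteq U^\alpha$ for every $\alpha$. The inequality $f_W(x) \leqslant \inf_\alpha f_{U^\alpha}(x)$ is then immediate from condition (2), since each $f_{U^\alpha}$ dominates $f_W$ pointwise. So the whole content of the statement lies in the reverse inequality
\[
f_W(x) \geqslant \inf_\alpha f_{U^\alpha}(x)
\]
for every $x\in X$.

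For the nontrivial direction I would fix $x\in X$, set $c = \inf_\alpha f_{U^\alpha}(x)$, assume $c>0$ (otherwise there is nothing to prove), and pick arbitrary rationals $p,q \in \mathbb I$ with $p<q<c$. Since $f_{U^\alpha}(x) > q$ for every $\alpha$, the definition of the approximation gives $x \in f_{U^\alpha}^{-1}((q,1]) = U^\alpha_q$, and in particular $x \in \cl(U^\alpha_q)$ for every $\alpha$. Hence
\[
x \in \bigcap_\alpha \cl(U^\alpha_q).
\]
At this point condition $(d)$ enters: it yields $x \in W_p = f_W^{-1}((p,1])$, so $f_W(x) > p$. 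Because $p$ was an arbitrary rational below $c$, we conclude $f_W(x) \geqslant c$, which is exactly the desired inequality.

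The proof is therefore almost a mechanical translation between the $f_U$-language and the $U_q$-language via Proposition \ref{p1}, once condition $(d)$ is available. There is no real obstacle; the only point that deserves a line of care is the insertion of two rationals $p<q<c$, which is used to bridge the strict inequality $f_{U^\alpha}(x)>q$ (membership in $U^\alpha_q$) with the closure step and then the strict conclusion $f_W(x) > p$. Taking the supremum over such $p$ then recovers the infimum $c$ and completes the argument.
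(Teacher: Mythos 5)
Your proof is correct and is essentially the paper's argument: both insert two rationals $p<q$ between $f_W(x)$ and $\inf_\alpha f_{U^\alpha}(x)$ and apply condition $(d)$ via the identification $U_q=f_U^{-1}((q,1])$; the paper merely phrases it as a proof by contradiction using the complemented form of $(d)$, while you run the same inclusion directly. No gap.
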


\begin{proof} Let $(U^\alpha)$ be a decreasing sequence of {regular open} sets and let $$W={\int \bigcap_\alpha U^\alpha}.$$ Suppose that there exists $x$ in $X$ such that $ f_W(x) \not=\inf_\alpha f_{U^\alpha} (x).$ By  condition $(2)$, we have  $ f_W(x) <\inf_\alpha f_{U^\alpha} (x).$ Choose rationals $p<q$ such that $$ f_W(x)<p<q <\inf_\alpha f_{U^\alpha} (x).$$ Since $f_W(x) <p$ implies $x\in X \setminus \cl(W_p)$, by condition $(d)$, we get $$x\in X \setminus W_p \subseteq \bigcup_\alpha X \setminus \cl ( U_q^{\alpha}).$$ So, there exists $\beta$ such that $x \in X \setminus \cl(U_q^{\beta})$, which implies that $f_{U^{\beta}}(x)\leqslant q;   $ a contradiction. \end{proof}

\begin{pro}\label{l2}  If the $\RO(X)$-stratification  $\{f_U\colon U\in \RO(X)\}$ fulfils  condition {\upshape (4)}, then the $\RO(X)$-approximation 
	$\{\{U_q\colon q\in \mathbb I \}\colon U \in \RO(X)\}$   fulfils   condition $(d)$.
\end{pro}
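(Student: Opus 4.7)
The plan is to unfold the definition of $U^\alpha_q = f_{U^\alpha}^{-1}((q,1])$ and then use condition (4) to compare $f_W(x)$ with the values $f_{U^\alpha}(x)$. Fix a decreasing sequence $(U^\alpha)$ of regular open sets, let $W=\int\bigcap_\alpha U^\alpha$, pick $p<q$ in $\mathbb I$, and take an arbitrary $x\in\bigcap_\alpha\cl(U^\alpha_q)$. The goal is to show $x\in W_p$, i.e.\ $f_W(x)>p$.

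The first step is a standard continuity remark: since $f_{U^\alpha}\colon X\to[0,1]$ is continuous and $[q,1]$ is closed in $[0,1]$, the preimage $f_{U^\alpha}^{-1}([q,1])$ is a closed set containing $U^\alpha_q=f_{U^\alpha}^{-1}((q,1])$, hence
\[
\cl(U^\alpha_q)\subseteq f_{U^\alpha}^{-1}([q,1]).
\]
Consequently $f_{U^\alpha}(x)\geqslant q$ for every index $\alpha$, which gives $\inf_\alpha f_{U^\alpha}(x)\geqslant q$.

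Now apply condition (4) to the decreasing sequence $(U^\alpha)$ and the point $x$: it yields $f_W(x)=\inf_\alpha f_{U^\alpha}(x)\geqslant q>p$. By the definition $W_p=f_W^{-1}((p,1])$, this means $x\in W_p$, which is exactly what condition $(d)$ asks for.

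There is essentially no hard step here; the only thing one has to be slightly careful about is that the trivial inclusion $U^\alpha_q\subseteq f_{U^\alpha}^{-1}([q,1])$ must be strengthened to the corresponding inclusion of closures, which is automatic because $f_{U^\alpha}$ is continuous. Everything else is a direct rewriting via Proposition \ref{p1} of the rule that turns an $\RO(X)$-stratification into an $\RO(X)$-approximation. Thus Propositions \ref{l1} and \ref{l2} together show that condition (4) for $\{f_U\}$ is equivalent to condition $(d)$ for the associated $\{\{U_q\}\}$.
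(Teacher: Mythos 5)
Your proof is correct and is essentially the paper's argument run forwards rather than by contradiction: both hinge on the same fact that continuity of $f_{U^\alpha}$ forces $\cl(U^\alpha_q)\subseteq f_{U^\alpha}^{-1}([q,1])$, followed by an application of condition (4). The paper phrases this as a neighbourhood argument after assuming $x\in\bigcap_\alpha\cl(U^\alpha_q)\setminus W_p$, but the content is identical.
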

\begin{proof} Let $(U^\alpha)$ be a decreasing sequence of {regular open} sets and let $$W={\int \bigcap_\alpha U^\alpha}.$$ Fix rationals $p<q$. Suppose that there exists $x\in \bigcap_\alpha\cl(U_q^\alpha) \setminus W_p $. Hence $f_W(x) \leqslant p <q$. By condition $(4)$ there exists $\beta$ such that $f_{U^\beta}(x) <q$. But the function $f_{U^\beta}$ is continuous, there exists an open $V\ni x$ such that $V \subseteq f^{-1}_{U^\beta}([0,q))$.  Therefore $V \cap U^{\beta}_q
 \not= \emptyset.$ If  $b\in V \cap U^{\beta}_q$, then $q\leqslant f_{U^\beta}(b) < q;$ a contradiction.
 \end{proof}
 
 Suppose that a family $\{f_U\colon U\in \RO(X)\}$ witnesses that a space $X$ is  $\ro$-stratifiable. This family   fulfils condition {\upshape(4)} if and only if it yields the $\RO(X)$-approximation which fulfils condition $(d)$. 
Thus, we obtain  a characterization of $\varkappa$-metrizable spaces,  looking close to the one given in \cite{chi}. 

Now, we will show why the double arrow space  $\mathbb D$ does not satisfy condition (4), which gives an alternative proof  that this space is not $\varkappa$-metrizable, compare \cite[Theorem 18]{shc}. 
Suppose that the space $\mathbb D$ is $\varkappa$-metrizable. Let  $\{\{U_q\colon q\in \mathbb I \}\colon U \in \RO(\mathbb D)\}$ be  an $\RO(\mathbb D)$-approximation. If $U=[(a,1),(b,0)] \subseteq \mathbb D$, then let $$t(U)=\sup\{p\in (0,1)\cap\mathbb Q\colon U=U_p\}.$$  Since each $U$ is a compact subset, by condition $(a)$, numbers $t(U)$ are well defined. Put
$$\textstyle R_p=\{x\in [0,\frac 1{10}]\colon t\left(((x,0),(\frac 1{5},1))\right)>p\},$$ where $p\in(0,1)\cap\mathbb Q.$ 
Note that $[0,\frac 1{10}]\subseteq\bigcup\{R_p\colon p\in(0,1)\cap\mathbb Q\}.$ By the Baire category theorem there is $p\in (0,1)\cap\mathbb Q$ such that $\int\cl R_p\ne\emptyset.$ Choose an increasing sequence $(x_k)$ which converges to $x$ and always $x_k\in R_p$.   Thus $$\textstyle 
 \cl\left(((x_k,0),(\frac 1{5},1))_p\right)=	 \cl\left(((x_k,0),(\frac 1{5}, 1))\right) = \left[(x_k, 1), (\frac 1{5},0)\right]$$ and 
$$\textstyle
\bigcap_k \left[(x_k, 1), (\frac 1{5},0)\right]=\left[(x,0),(\frac 1{5},0)\right] \nsubseteq \left((x,0),(\frac 1{5}, 1)\right);$$ which contradicts condition $(d)$.

\section{A $\varkappa$-metric for the  Niemytzki plane}
In \cite[p. 827]{shc} it was noted that V. Zaitsev showed that the Niemytzki plane is $\varkappa$-normal. A proof of this fact one can find in \cite{bp}. 
The Niemytzki plane $L$, compare \cite[p. 39]{en}, it is the closed upper half-plane  plane $L = \mathbb{R} \times [0,\infty)$  endowed with the topology generated by open discs disjoint with the real axis $L_1=\{(x,0)\colon x \in \mathbb R\}$	and all sets of the form $\{a\} \cup D$ where $D\subseteq L$ is an open disc which is tangent to $L_1$ at the point $a \in L_1$. For our needs, we  use the following notations.
Let $B((x,y), r)$ denote the open disc with   centre $(x,y)$ and  radius $r$, but    $B^*(x, r) =  \{(x,0)\}\cup B((x,r), r)$.  Put $$ \textstyle \mathcal{B}=\{B((x,y), r)\colon   (x,y)\in L\setminus L_1  \text{ and } r \leqslant y \text{ and } 0<r\leqslant 1\}$$ and 
$\mathcal{B}^*=\{B^*(x,r)\colon   0<r\leqslant 1 \mbox{ and } x\in \mathbb R \}$. 
Thus, the family  $ \frak{B}=  \mathcal{B}^*\cup\mathcal{B} $ is a base for $L$. 
\begin{fac}\label{p7} The family $\frak{B}$ is closed with respect to increasing unions.  
\end{fac}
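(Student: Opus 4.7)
The plan is to take a chain $(C_\alpha)_{\alpha\in A}$ in $\frak{B}$, linearly ordered by inclusion, and show $U:=\bigcup_\alpha C_\alpha\in\frak{B}$ by case analysis on whether some element of the chain lies in $\mathcal{B}^*$. The geometric engine in both cases is a routine planar lemma: if $(B(c_\alpha,r_\alpha))$ is a nested family of open Euclidean discs with radii bounded above by $1$, then $\bigcup_\alpha B(c_\alpha,r_\alpha)=B(c,R)$, where $R=\sup_\alpha r_\alpha$ and $c=\lim_n c_{\alpha_n}$ along any cofinal sequence with $r_{\alpha_n}\to R$. The nesting $B(c,r)\subseteq B(c',r')$ amounts to $|c-c'|+r\leqslant r'$, which makes the centres Cauchy along such a sequence; both inclusions then follow from standard triangle-inequality estimates like $|p-c|\leqslant|p-c_{\alpha_n}|+|c_{\alpha_n}-c|<r_{\alpha_n}+(R-r_{\alpha_n})=R$.

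If every $C_\alpha\in\mathcal{B}$, I would write $C_\alpha=B((x_\alpha,y_\alpha),r_\alpha)$ with $r_\alpha\leqslant y_\alpha$ and $r_\alpha\leqslant 1$. The lemma yields $U=B((x,y),R)$ with $R\leqslant 1$, and passing to the limit in $y_{\alpha_n}\geqslant r_{\alpha_n}$ along the cofinal sequence gives $y\geqslant R$; hence $U\in\mathcal{B}$.

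If some $C_{\alpha_0}=B^*(a,r_{\alpha_0})$ belongs to $\mathcal{B}^*$, then every $\alpha\geqslant\alpha_0$ satisfies $(a,0)\in C_\alpha$. Since sets in $\mathcal{B}$ miss $L_1$, this forces $C_\alpha\in\mathcal{B}^*$; and since each $\mathcal{B}^*$-set meets $L_1$ in a single point, necessarily $C_\alpha=B^*(a,r_\alpha)$ with the same tangent point $a$. Applying the geometric lemma to the cofinal subchain $\{B((a,r_\alpha),r_\alpha):\alpha\geqslant\alpha_0\}$, whose centres $(a,r_\alpha)$ converge to $(a,R)$ with $R=\sup_\alpha r_\alpha\leqslant 1$, one obtains $\bigcup_{\alpha\geqslant\alpha_0}B((a,r_\alpha),r_\alpha)=B((a,R),R)$; adjoining the common tangent point gives $U=B^*(a,R)\in\mathcal{B}^*$. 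The earlier $\mathcal{B}$-entries contribute nothing new, since they are already contained in $B((a,r_{\alpha_0}),r_{\alpha_0})$.

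The main obstacle is the geometric lemma itself: the centre of the limit disc is not in general the centre of any member of the chain and must be produced via the cofinal-sequence Cauchy argument. Once this is in place, the rest is bookkeeping exploiting the clean dichotomy that $\mathcal{B}$-discs lie strictly in the open upper half-plane while each $\mathcal{B}^*$-disc meets $L_1$ at exactly its tangent point.
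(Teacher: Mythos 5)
Your proof is correct and follows essentially the same route as the paper: identify the union of the nested discs as the disc whose radius is the supremum of the radii and whose centre is the limit of the centres, then split according to whether the chain meets $\mathcal{B}^*$. You are in fact somewhat more careful than the paper --- you justify convergence of the centres via the nesting inequality $|c-c'|+r\leqslant r'$ rather than the paper's uniqueness-of-centre argument, and you explicitly treat the mixed case where the chain starts in $\mathcal{B}$ and ends in $\mathcal{B}^*$ --- but the underlying idea is identical.
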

\begin{proof} Let  $U_n= B((x_n,y_n), r_n) \in  \mathcal{B}$ and let $(U_n )$ be an increasing   sequence. Thus the sequence of reals $(r_n)$, being bounded and increasing is convergent, i.e., $r_n \to r$.  Also, the sequence $((x_n,y_n))$ has no two subsequences  which converge to different points. Indeed,  if $(x_{k_n},y_{k_n}) \to (x,y)$ and $(x_{m_n},y_{m_n}) \to (x',y')$ and $(x,y) \not= (x',y')$, then the union $\bigcup\{U_n\colon n \geqslant 0\}$ would be a disc with radius $r$ and with two different centres, which is impossible in the Euclidean metric. Thus $(x_n,y_n) \to (x,y)$ and  $\bigcup\{U_n\colon n \geqslant 0\}=B((x,y),r)$. 
If  $U_n= B^*(x_n, r_n) \in  \mathcal{B}^*$, then we get  
$(x_{n},y_{n}) \to (x,r)$ and  $\bigcup\{U_n\colon n \geqslant 0\}=B^*(x,r)$.    \end{proof}

The above proposition is surely folklore. We include it to make   elementary methods, that we use below, more understandable. So, we think the reader will have no trouble justifying that: If $U_n= B^*(x_n, r_n) \in  \mathcal{B}^*$ and  $(U_n )$ is a decreasing   sequence, then the sequence $(x_n)$ is constant and  hence $\int_L \bigcap\{U_n\colon n \geqslant 0\}$ is empty or it belongs to $\mathcal{B}^*$. 
We are in a position to define an $\RO(L)$-stratification. If $U=B((a,b), r) \in \mathcal{B}$, then put $$ f_U(x,y)= \begin{cases} r- \sqrt{(x-a)^2 + (y-b)^2}, & \mbox{ when } (x,y) \in U;\\
0, & \mbox{ for other cases}.\\ \end{cases} $$ Thus, $f_U(x,y)$ is the distance between the point $(x,y)$ and the complement of the open disc $B((a,b), r)=U$. 

If $U  = B((a,r), r) \cup \{(a,0)\} \in \mathcal{B}^*$, then put $$ f_U(x,y)= \begin{cases} r- \sqrt{(x-a)^2 + (y-r)^2}, & \mbox{ when } (x,y) \in U \mbox{ and } r\leqslant y;\\
r,& \mbox{ when } (x,y)=(a,0); \\ 
r- \frac{r|x-a|}{\sqrt{2yr- y^2}}, & \mbox{ when } (x,y) \in U \mbox{ and } y<r;\\0, & \mbox{ for other cases}.\\ \end{cases} $$ Any function $f_U$ is continuous in   $ L \setminus L_1$, with respect to the Euclidean topology,  and hence it is continuous in $ L \setminus L_1$, with respect to the Niemytzki plane. Suppose that  $\lim_{n\to \infty} (x_n, y_n) = (a,0)$ with respect to the Niemytzki plane. Without loss of generality we can assume that $(x_n,y_n) \in B( (a,\frac1n) ,\frac1n ) $ and  $\frac2n < r$. Since always   $ |x_n-a|<\sqrt{\frac2n y_n -y_n^2}$ and  $y_n \to 0$ we get $$r \geqslant f_U(x_n,y_n)= r- \frac{r|x_n-a|}{\sqrt{2y_nr- y_n^2}}\geqslant  r- \frac{r\sqrt{\frac2n - y_n}}{\sqrt{2r- y_n}} \xrightarrow[n\to \infty]{} r.$$ Thus, we have checked that for each $U \in \frak{B}$ the function  $f_U\colon L \to [0,1]$   is  continuous.

For a given regular open set  $V\in \RO(L)$   put  $$f_V(x,y)= \sup\{f_U(x,y)\colon  U\in \frak{B} \mbox{ and } U\subseteq V\}.$$  If $V\in \frak{B}$, then  both  definitions of  $f_V$ coincide. Also, if $(x,y)\in L \setminus V$, then $f_V(x,y) =0$. 

\begin{lem} \label{l8} If $(x,y)\in V \in \RO(L)$, then there exists $U\in\frak{B} $ such that $U \subseteq V$ and  $f_V(x,y)=f_U(x,y)>0.$\end{lem}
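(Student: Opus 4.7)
I would begin by noting that $f_V(x,y)>0$ follows at once: since $\frak B$ is a base and $(x,y)\in V$, pick any $U_0\in\frak B$ with $(x,y)\in U_0\subseteq V$; then $f_V(x,y)\geqslant f_{U_0}(x,y)>0$. Setting $s:=f_V(x,y)$, I choose a sequence $U_n\in\frak B$ with $U_n\subseteq V$ and $f_{U_n}(x,y)\to s$, and may assume $f_{U_n}(x,y)\geqslant s/2$ throughout, so $(x,y)\in U_n$. After passing to a subsequence all $U_n$ come from one of the two subfamilies $\mathcal B$ or $\mathcal B^*$. The overall plan is to parameterize each family by a few real numbers, use the lower bound $s/2$ to confine those parameters to a compact set, pass to a subsequential limit $U^{\ast}\in\frak B$, and verify $U^{\ast}\subseteq V$ and $f_{U^{\ast}}(x,y)=s$.

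In the sub-case $U_n=B((a_n,b_n),r_n)\in\mathcal B$ (which forces $y>0$, since $r_n\leqslant b_n$ forbids $(x,0)\in U_n$), the bounds $s/2\leqslant r_n\leqslant\min(b_n,1)$ together with $\sqrt{(x-a_n)^2+(y-b_n)^2}\leqslant r_n-s/2$ confine $(a_n,b_n,r_n)$ to a compact region. A convergent subsequence yields $(a,b,r)$ with $0<s/2\leqslant r\leqslant b$ and $r\leqslant 1$, so $U^{\ast}:=B((a,b),r)\in\mathcal B$; any $p\in U^{\ast}$ satisfies $d(p,(a_n,b_n))<r_n$ eventually, whence $p\in U_n\subseteq V$, giving $U^{\ast}\subseteq V$. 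Continuity of Euclidean distance then passes $f_{U_n}(x,y)\to f_{U^{\ast}}(x,y)=s$. The sub-case $U_n\in\mathcal B^*$ with $y=0$ is easier: the condition $(x,0)\in U_n$ forces $U_n=B^*(x,r_n)$, by condition (2) I may take $(r_n)$ increasing, and Fact \ref{p7} delivers $U^{\ast}=\bigcup_n U_n=B^*(x,s)\subseteq V$ with $f_{U^{\ast}}(x,0)=s$.

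The main obstacle is the final sub-case, $U_n=B^*(x_n,r_n)\in\mathcal B^*$ with $y>0$. Boundedness of $(x_n,r_n)$ again follows from $(x,y)\in B((x_n,r_n),r_n)$, $r_n\leqslant 1$, and $r_n\geqslant s/2$ (implied by $f_{U_n}(x,y)\geqslant s/2$), so a subsequence converges to $(x^*,r)$ with $r\geqslant s/2>0$; set $U^{\ast}:=B^*(x^*,r)$. The inclusion $B((x^*,r),r)\subseteq V$ is routine, but the tangent point $(x^*,0)$ need not lie in any individual $U_n$, and here I will need to exploit the regular openness of $V$. For every $\rho<r$, the tangent discs $B((x^*,\rho),\rho)$ and $B((x^*,r),r)$ share their tangent point and so are nested by radius, giving $B((x^*,\rho),\rho)\subseteq B((x^*,r),r)\subseteq V$. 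Hence every Niemytzki-basic neighbourhood $\{(x^*,0)\}\cup B((x^*,\rho),\rho)$ of $(x^*,0)$ sits in $\cl_L V$, whence $(x^*,0)\in\int_L\cl_L V=V$. Joint continuity of the piecewise formula for $f_{B^*(x^*,r)}$ in the parameters (the two pieces glue along $r=y$) would then yield $f_{U^{\ast}}(x,y)=s$, completing the proof.
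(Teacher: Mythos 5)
Your proposal is correct and follows essentially the same route as the paper: extract a convergent subsequence of the ball parameters, show the limit ball lies in $V$ (invoking regular openness of $V$ to capture the tangent point in the $\mathcal{B}^*$ case), pass to the limit in the explicit formulas, and handle $y=0$ via the increasing union of Fact \ref{p7}. The only difference is that you spell out the compactness and continuity details that the paper leaves implicit.
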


\begin{proof}	
	 Suppose  $0<f_V(x,y)= \lim_{n\to \infty} f_{U_n}(x,y)$, where $U_n \in \frak{B}$ and $U_n \subseteq V$.
If  $U_n \in \mathcal{B}$ for infinitely many $n$, then we can assume that $B((x_n,y_n), r_n) = U_n$, where $x_n \to a$ and $y_n\to b$ and $r_n\to r>0$. We get 
$U=B((a,b),r) \subseteq V$. Indeed, fix $(c,e) \in U$. Let $\epsilon >0$ be such that $d((c,e), (a,b))=r-\epsilon$, where $d$ is  the Euclidean distance.  
Choose $n$ such that $$r_n > r - \frac{\epsilon}{2}\mbox{  and } d((a,b),(x_n,y_n))< \frac{\epsilon}{2}.$$ We have $$\textstyle d((c,e), (x_n,y_n)) \leqslant d((c,e), (a,b)) +
d((a,b), (x_n,y_n)) < r- \frac{\epsilon}{2}<r_n.$$ Therefore $(c,e) \in U_n \subseteq V$. Moreover, \begin{multline*}
f_V(x,y) = \lim_{n\to \infty} f_{U_n}(x,y)= \lim_{n\to \infty} \max\{0, r_n- \sqrt{(x-x_n)^2 + (y-y_n)^2}\}\\= \max\{0, r- \sqrt{(x-a)^2 + (y-b)^2}\} =   f_U(x,y).\end{multline*} 

If  $ U_n = B^*(a_n, r_n) \in \mathcal{B}^*$ for almost all $n$, then we can assume that  $a_n \to a$ and  $r_n\to r$ and $0<y<r_n$, since the case when  $y\geqslant r_n$ for infinitely many $n$  one can reduce to the previous reasoning. Similarly as above, check that  $ B((a,r),r) \subseteq V$.  We have $U=  B^*(a,r) \subseteq V,$ since $V\in  \RO(L)$. Therefore \begin{multline*}
f_V(x,y) = \lim_{n\to \infty} f_{U_n}(x,y)= \lim_{n\to \infty} \textstyle\max\left\{0, r_n- \frac{r_n|x-a_n|}{\sqrt{2yr_n- y^2}}\right\} \\=\textstyle \max\left\{0, r- \frac{r|x-a|}{\sqrt{2yr- y^2}}\right\}=   f_U(x,y).\end{multline*}

If $y=0$, then the family $\{U\colon (x,0) \in U \in \frak{B} \}=\{U\colon (x,0) \in U \in \mathcal{B}^* \} $ is linearly ordered by the inclusion. By Proposition \ref{p7}, the union of this family is contained in $V$. So, it gives a desired base set.   
\end{proof}

\begin{pro}\label{p9} If $V \in \RO(L)$, then the function $f_V\colon L\to [0,1]$ is continuous. \end{pro}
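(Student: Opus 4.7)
The plan is to split the continuity of $f_V$ into lower and upper semi-continuity and argue them separately. Lower semi-continuity is essentially immediate: $f_V$ is the pointwise supremum $\sup\{f_U\colon U\in\frak{B},\ U\subseteq V\}$, and each $f_U$ has already been shown to be continuous, so $f_V$ is lower semi-continuous on $L$.

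For upper semi-continuity, fix $(x,y)\in L$ and a Niemytzki-convergent sequence $(x_n,y_n)\to(x,y)$. After extracting a subsequence it suffices to show that if $f_V(x_n,y_n)\to L>0$ then $L\leqslant f_V(x,y)$. Using Lemma \ref{l8}, pick $U_n\in\frak{B}$ with $U_n\subseteq V$ and $f_V(x_n,y_n)=f_{U_n}(x_n,y_n)$. Since $f_{U_n}(x_n,y_n)$ is bounded above by the radius of $U_n$, these radii lie eventually in $[L/2,1]$, and their centres are bounded because the convergent $(x_n,y_n)$ lies in $U_n$. Passing to a further subsequence I may assume that all $U_n$ belong to a single subfamily ($\mathcal{B}$ or $\mathcal{B}^*$) and that the defining parameters converge, yielding a limiting set $U^*\in\frak{B}$.

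The key step is verifying that $U^*\subseteq V$. For a point $(c,e)$ in the open-disc part of $U^*$, the triangle-inequality argument used in the proof of Lemma \ref{l8} shows that $(c,e)\in U_n\subseteq V$ for all sufficiently large $n$. When $U^*=B^*(a,r)\in\mathcal{B}^*$ one must additionally check that the tangent point $(a,0)$ lies in $V$; this is the one place where the assumption $V\in\RO(L)$, i.e.\ $V=\int\cl V$, is essential. Choosing $\delta<r/2$, the basic neighbourhood $B^*(a,\delta)$ is contained in $\{(a,0)\}\cup B((a,r),r)$; the open-disc part is contained in $V$ by the preceding argument, and $(a,0)\in\cl V$ because every basic neighbourhood of $(a,0)$ meets $V$. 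Hence $B^*(a,\delta)\subseteq\cl V$, which forces $(a,0)\in\int\cl V=V$.

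Once $U^*\subseteq V$ is known, the inequality $L\leqslant f_{U^*}(x,y)\leqslant f_V(x,y)$ is obtained by passing to the limit in the explicit formula for $f_{U_n}(x_n,y_n)$, which is routine when $(x,y)\in L\setminus L_1$. The sub-case I expect to be the main obstacle is $(x,y)=(a,0)\in L_1$ with all $U_n\in\mathcal{B}^*$: there the ratio $|x_n-a_n|/\sqrt{2y_nr_n-y_n^2}$ in the formula for $f_{U_n}(x_n,y_n)$ need not converge, but this can be bypassed by the crude bound $f_{U_n}(x_n,y_n)\leqslant r_n\to r=f_{U^*}(x,0)$, which already gives $L\leqslant r\leqslant f_V(x,0)$.
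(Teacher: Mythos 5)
Your proposal is correct and follows essentially the same route as the paper: lower semi-continuity is automatic for a supremum of continuous functions, and upper semi-continuity is obtained by extracting convergent parameters from witnessing basic sets, checking that the limit set lies in $V$ (with regular-openness needed for the tangent point, exactly as in Lemma~\ref{l8}), and using the crude bound $f_{U_n}\leqslant r_n$ on $L_1$. The only difference is cosmetic: you select the $U_n$ via Lemma~\ref{l8} and spell out the inclusion $U^*\subseteq V$, which the paper's proof leaves implicit.
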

\begin{proof}  Assume that $\lim_{n \to \infty}(x_n,y_n) = (x,y) $ with respect to the Niemytzki plane. Suppose that $\lim_{n\to \infty} f_V(x_n,y_n) >p  > f_V(x,y)$. Choose $U_n \in \frak{B}$ such that  $(x_n,y_n) \in U_n \subseteq V$ and constantly  $ f_{U_n}(x_n,y_n) >p$. Since $$U_n = B((a_n,b_n), r_n) \mbox{  or } U_n = B^*(a_n,r_n)$$
we can assume  $a_n \to a$ and $b_n\to b$ and $r_n\to r>0$.

Let
$U = B((a,b), r) \mbox{  or } U = B^*(a,r)$,  and also  $ r_n \leqslant y_n$ for infinitely many $n$, then we get
 \begin{multline*}
 p\leqslant \lim_{n\to \infty} f_{U_n}(x_n,y_n)= \lim_{n\to \infty} \max\{0, r_n- \sqrt{(x_n-a_n)^2 + (y_n- b_n)^2}\} \\= \max\{0, r- \sqrt{(x-a)^2 + (y-b)^2}\} =   f_U(x,y)\leqslant f_V(x,y) <p;\end{multline*} a contradiction. 

But if $U=B^*(a,r) \subseteq V$  and $y>0$ and $ r_n \geqslant y_n$ for infinitely many $n$, then we get
 \begin{multline*} p\leqslant 
f_V(x,y) = \lim_{n\to \infty} f_{U_n}(x,y)= \lim_{n\to \infty} \textstyle\max\left\{0, r_n- \frac{r_n|x-a_n|}{\sqrt{2yr_n- y^2}}\right\} \\=\textstyle \max\left\{0, r- \frac{r|x-a|}{\sqrt{2yr- y^2}}\right\}=   f_U(x,y) <p;\end{multline*} again we have a contradiction.

If $y=0$, then $a_n \to x$ and $U= B^*(x,r) \subseteq V$. So, $$p> f_V(x,0) \geqslant  f_U(x,0) = r =\lim_{n\to \infty}r_n \geqslant  \lim_{n\to \infty} f_{U_n}(x,y) > p;$$ a contradiction which finishes the proof. 
 \end{proof}

Obviously,  Proposition 
\ref{p9} gives an alternative proof that the Niemytzki plane is $\varkappa$-normal and we obtain the  following.

\begin{cor} The Niemytzki plane is $\ro$-stratifiable. \hfill $\Box$\end{cor}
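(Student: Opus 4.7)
The plan is to verify that the family $\{f_V\colon V\in\RO(L)\}$ constructed just above witnesses conditions (1)--(3) for $\mathcal{B}=\RO(L)$. All three ingredients are already in place: the explicit formulas for $f_U$ on base sets, the sup-definition extending to arbitrary $V\in\RO(L)$, Lemma \ref{l8} realising that supremum, and Proposition \ref{p9} delivering continuity. What remains is to check that these pieces fit together.

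First I would verify condition (3): this is precisely Proposition \ref{p9}, so nothing more is needed. Next, for condition (2), suppose $V_1,V_2\in\RO(L)$ and $V_1\subseteq V_2$. Then every $U\in\frak{B}$ with $U\subseteq V_1$ also satisfies $U\subseteq V_2$, hence the family over which the supremum in the definition of $f_{V_1}(x,y)$ is taken is contained in the one defining $f_{V_2}(x,y)$; therefore $f_{V_1}(x,y)\leqslant f_{V_2}(x,y)$ pointwise. Both values lie in $[0,1]$ because each base radius is at most $1$, so the explicit formulas for $f_U$ on members of $\frak{B}$ are bounded by $1$, and the supremum inherits this bound.

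For condition (1), I would split into two cases. If $(x,y)\notin V$, then no $U\in\frak{B}$ with $U\subseteq V$ can contain $(x,y)$, so every $f_U(x,y)$ in the defining supremum vanishes and $f_V(x,y)=0$. If $(x,y)\in V$, then $V$ being open in $L$ yields some $U\in\frak{B}$ with $(x,y)\in U\subseteq V$; inspection of the explicit formulas shows $f_U(x,y)>0$, and Lemma \ref{l8} in fact guarantees $f_V(x,y)=f_U(x,y)>0$. Hence $V=f_V^{-1}((0,1])$.

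I do not expect any real obstacle: the corollary is essentially a bookkeeping assembly of the earlier statements, with Lemma \ref{l8} doing the heavy lifting for positivity in (1), Proposition \ref{p9} for (3), and the sup-definition making (2) automatic.
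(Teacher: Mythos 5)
Your proposal is correct and matches the paper's intent exactly: the corollary is stated with an immediate $\Box$ because it is precisely the bookkeeping you describe — condition (3) is Proposition \ref{p9}, condition (2) is the monotonicity of the supremum, and condition (1) follows from Lemma \ref{l8} together with the remark that $f_V$ vanishes off $V$. Nothing is missing.
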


Now, it seems natural to verify that the Niemytzki plane is $\varkappa$-metrizable.

\begin{thm} The Niemytzki plane  is $\varkappa$-metrizable. \end{thm}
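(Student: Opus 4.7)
By Propositions~\ref{l1} and \ref{l2} it suffices to verify condition $(4)$ for the $\RO(L)$-stratification $\{f_V\colon V\in\RO(L)\}$ of the previous section: for every decreasing sequence $(U^\alpha)$ of regular open subsets of $L$, with $W=\int\bigcap_\alpha U^\alpha$, and every $x\in L$, one has $f_W(x)=\inf_\alpha f_{U^\alpha}(x)$. The inequality $f_W(x)\leqslant\inf_\alpha f_{U^\alpha}(x)$ is immediate from condition $(2)$. Set $t:=\inf_\alpha f_{U^\alpha}(x)$ and assume $t>0$; the plan is to exhibit a base set $V^{*}\in\frak{B}$ with $V^{*}\subseteq W$, $x\in V^{*}$ and $f_{V^{*}}(x)\geqslant t$, so that $f_W(x)\geqslant f_{V^{*}}(x)\geqslant t$.

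The approach is a finite-intersection argument on the space of admissible base sets. Let $K$ be the set of all $V\in\frak{B}$ with $x\in V$ and $f_V(x)\geqslant t$, parametrized naturally: $V=B((a,b),r)\in\mathcal{B}$ by the triple $(a,b,r)$ with $b\geqslant r$ and $0<r\leqslant 1$, and $V=B^{*}(a,r)\in\mathcal{B}^{*}$ by the pair $(a,r)$ with $0<r\leqslant 1$. The constraint $f_V(x)\geqslant t>0$ confines these parameters to a bounded closed region, making $K$ compact as a disjoint union of two compact parameter spaces. For each $\alpha$ put
$$\mathcal{F}_\alpha:=\{V\in K\colon V\subseteq U^\alpha\}.$$
Each $\mathcal{F}_\alpha$ is non-empty, containing the base set produced by Lemma~\ref{l8} applied to $U^\alpha$, and $(\mathcal{F}_\alpha)$ is decreasing because $(U^\alpha)$ is. Once every $\mathcal{F}_\alpha$ is shown to be closed in $K$, compactness of $K$ together with the finite intersection property yields a point $V^{*}\in\bigcap_\alpha\mathcal{F}_\alpha$; such a $V^{*}$ is open and contained in every $U^\alpha$, hence $V^{*}\subseteq\int\bigcap_\alpha U^\alpha=W$, completing the argument.

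The main obstacle is the closedness of $\mathcal{F}_\alpha$ in $K$, and this is precisely where the regular-openness of $U^\alpha$ is used. Suppose parameters of $V_n\in\mathcal{F}_\alpha$ converge to those of $V\in K$. A standard pointwise-limit argument inside the underlying Euclidean discs shows that every strict interior point of $V$ lies eventually in the corresponding disc of $V_n$, hence in $U^\alpha$; for $V_n,V\in\mathcal{B}$ this already gives $V\subseteq U^\alpha$. The delicate case is $V_n=B^{*}(a_n,r_n)\to V=B^{*}(a,r)$ in $\mathcal{B}^{*}$: here $(a_n,0)\in U^\alpha$ does not imply $(a,0)\in U^\alpha$ directly, because $(a_n,0)$ does not converge to $(a,0)$ in the Niemytzki topology unless the tangent points are eventually constant. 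The plan is to show nevertheless that $(a,0)\in U^\alpha$ as follows: for any $s\in(0,r)$ the pointwise argument yields $B((a,s),s)\subseteq U^\alpha$, each Niemytzki neighborhood $B^{*}(a,\delta)$ of $(a,0)$ meets $B((a_n,r_n),r_n)\subseteq U^\alpha$ for $n$ large, hence $(a,0)\in\cl U^\alpha$ and therefore $B^{*}(a,s)\subseteq\cl U^\alpha$; since $U^\alpha=\int\cl U^\alpha$ this places $(a,0)$ in $U^\alpha$ and thus $V\subseteq U^\alpha$. The finite-intersection mechanism is insensitive to the order type of the index, so the argument covers the transfinite sequences addressed by condition $(d)$ of Propositions~\ref{l1}--\ref{l2} just as well as countable ones.
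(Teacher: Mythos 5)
Your proof is correct, and it rests on the same two pillars as the paper's own argument: Lemma~\ref{l8}, which for each $\alpha$ produces a witnessing base set $V_\alpha\in\frak{B}$ with $V_\alpha\subseteq U^\alpha$ and $f_{V_\alpha}(x)=f_{U^\alpha}(x)\geqslant t$, and the observation that the parameters (centre, radius, tangent point) of such base sets range over a compact set, so that a limiting member of $\frak{B}$ contained in $W$ exists and still satisfies $f(x)\geqslant t$. The paper runs this as a direct sequential-limit computation: for a countable decreasing chain $\{U_n\colon n>0\}$ it takes the $V_n$ from Lemma~\ref{l8}, lets centres and radii converge, and checks that the limit disc (or limit tangent disc) lies in $W$ and realizes $\lim_n f_{U_n}(x)$. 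You repackage the same geometry as a finite-intersection-property argument on the compact parameter space $K$, with $\mathcal{F}_\alpha=\{V\in K\colon V\subseteq U^\alpha\}$ non-empty (by Lemma~\ref{l8}), decreasing and closed. This buys two things. First, it covers transfinite decreasing sequences, which condition $(4)$ (read against Shchepin's (K4) and the remark preceding condition $(d)$) is meant to include and which the paper's proof, written only for $\omega$-chains, does not literally address. Second, you isolate and correctly settle the one genuinely delicate point, namely that for a limit $B^*(a,r)$ of tangent discs $B^*(a_n,r_n)\subseteq U^\alpha$ with moving tangent points one still gets $(a,0)\in U^\alpha$: your route via $(a,0)\in\cl U^\alpha$ together with $U^\alpha=\int\cl U^\alpha$ is exactly the maneuver the paper performs inside the proof of Lemma~\ref{l8} (``$U=B^*(a,r)\subseteq V$, since $V\in\RO(L)$'') but leaves implicit in the proof of the theorem itself. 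A cosmetic remark: the opening appeal to Propositions~\ref{l1} and~\ref{l2} is not what reduces the theorem to condition $(4)$ --- conditions (1)--(3) are supplied by Proposition~\ref{p9} and the preceding construction, and $(4)$ is simply the remaining axiom --- but nothing in your argument depends on that reference.
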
  \begin{proof} The family $\{f_V\colon V \in \RO(L)\}$ witnesses that the Niemytzki plane $L$  is $\ro$-stratifiable. We have showed that  this family satisfies conditions (1)--(3). So, it remains to show that it satisfies condition $(4)$. Fix a decreasing chain $\{U_n\colon n>0 \}$ consisting of regular open sets of the Niemytzki plane and put $W=\int \bigcap  \{U_n\colon n>0 \}$. Since we still have $W\subseteq U_n$,  we get $f_W(x)\leqslant  \inf\{f_{U_n}(x)\colon n>0\}$ for any $x\in L$. 
Fix  $x\in L$.    For each $n$, using Lemma \ref{l8},  choose $V_n \in \frak{B}$ such that $f_{U_n}(x)=f_{V_n}(x)$. If  $B((x_n,y_n), r_n) = V_n$, where $x_n \to a$ and $y_n\to b$ and $r_n\to r>0$, then we get 
$U=B((a,b),r) \subseteq W$ and $f_U(x) = \lim_{n\to +\infty} f_{V_n}(x)$. But if  $B^*(x_n,r_n) = V_n$, where $x_n \to a$ and $r_n\to r>0$, then we get 
$U=B^*(a,r)  \subseteq W$ and $f_U(x) = \lim_{n\to +\infty} f_{V_n}(x)$. Therefore $f_W(x) = \lim_{n\to +\infty} f_{U_n}(x)$. \end{proof}

\begin{pro} The Niemytzki plane is not stratifiable. \end{pro}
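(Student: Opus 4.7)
The plan is to exploit the well-known implication \emph{stratifiable $\Rightarrow$ normal}, together with the classical fact that the Niemytzki plane $L$ fails to be normal.

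First I would note that if a family $\{f_U\colon U\ \text{open in}\ L\}$ satisfied conditions $(1)$--$(3)$, then $L$ would already be normal, by exactly the quotient-of-functions argument that the paper uses in Section~2 to derive $\varkappa$-normality from $(1)$ and $(3)$ on $\RO(X)$. Indeed, given disjoint closed sets $F,G \subseteq L$, set $f = f_{L\setminus F}$ and $g = f_{L\setminus G}$; by $(1)$ we have $F = f^{-1}(0)$ and $G = g^{-1}(0)$, so $f+g>0$ on $L$ and $f/(f+g)$ is a continuous function separating $F$ and $G$ via the preimages of $[0,\tfrac12)$ and $(\tfrac12,1]$.

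Hence it suffices to verify that $L$ is not normal, which I would do by a Baire category argument in the spirit of Proposition \ref{pr1}. Split the closed discrete $x$-axis as $A = \mathbb Q \times \{0\}$ and $B = (\mathbb R \setminus \mathbb Q) \times \{0\}$, and suppose for contradiction that disjoint open sets $U \supseteq A$ and $V \supseteq B$ exist. For each irrational $a$ pick $r_a \in (0,1]$ with $B^*(a, r_a) \subseteq V$ and form the sets $A_n = \{a \in \mathbb R \setminus \mathbb Q\colon r_a \geqslant 1/n\}$. Since $\mathbb R = \mathbb Q \cup \bigcup_n A_n$ and $\mathbb Q$ is meager in the Euclidean line, the Baire theorem yields $n$ such that $\cl A_n$ contains a non-degenerate interval $(c,d)$. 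Choose a rational $q \in (c,d)$ and a radius $s \in (0,1]$ with $B^*(q,s) \subseteq U$, and then an irrational $a \in A_n \cap (c,d)$ with $(a-q)^2 < 4s/n$, which is possible since $A_n$ meets every subinterval of $(c,d)$. The elementary Euclidean identity
\[
(s+1/n)^2 - (s - 1/n)^2 = 4s/n
\]
then forces $\sqrt{(a-q)^2 + (s - 1/n)^2} < s + 1/n$, i.e.\ the open disks $B((q,s),s)$ and $B((a, 1/n), 1/n)$ intersect, producing a point in $B^*(q,s) \cap B^*(a, 1/n) \subseteq U \cap V$ and contradicting disjointness.

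There is no essential obstacle here; both ingredients are classical. The only point worth emphasising is that the first step reuses the $f/(f+g)$ trick the paper already employs for $\varkappa$-normality, applied to the family of \emph{all} open sets in place of $\RO(L)$, which is exactly the extra strength that distinguishes stratifiability from $\ro$-stratifiability in the presence of a non-normal space.
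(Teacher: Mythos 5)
Your proof is correct, but it takes a genuinely different route from the paper's. The paper argues directly against the stratification: it defines sets $P_{m,n}$ from the values of $f_{B^*(x,1)}$ near the tangent point, applies the Baire category theorem on the real line, and then uses the monotonicity condition $(2)$ together with the continuity of $f_{L\setminus\{(a,0)\}}$ to reach a contradiction --- a self-contained argument that deliberately parallels the Sorgenfrey-line computation in Proposition \ref{pr1}. You instead factor the proof through normality: conditions $(1)$ and $(3)$ alone make every closed set a zero set, so the $f/(f+g)$ quotient separates disjoint closed sets, and the classical Baire-category proof that $\mathbb{Q}\times\{0\}$ and $(\mathbb{R}\setminus\mathbb{Q})\times\{0\}$ cannot be separated finishes the job; your tangency estimate $(s+1/n)^2-(s-1/n)^2=4s/n$ is the right one and the density of $A_n$ in $(c,d)$ does supply the needed irrational $a$. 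What your approach buys is modularity and a slightly stronger conclusion --- condition $(2)$ is never used, so $L$ admits no family satisfying even $(1)$ and $(3)$ over all open sets --- at the cost of importing the non-normality of $L$ (which you then have to prove anyway). What the paper's approach buys is a single direct computation that stays inside the stratification formalism and makes visible exactly where condition $(2)$ clashes with continuity at a boundary point, in the same style as its other counterexamples.
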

\begin{proof} Suppose that there exists a family of functions $$\{f_U\colon U \mbox{ is an open subset of } L\}$$  which fulfils conditions $(1), (2)$
and $(3)$.  Put $$\textstyle P_{m,n}= \{x\in \mathbb R\colon f_{B^*(x,1)}(x,y) >\frac1n \mbox{ whenever } 0\leqslant y<\frac1m \}. $$ Since $\mathbb R=\bigcup\{P_{n,m}\colon m>0 \mbox{ and } n>0\}$,
by the Baire category theorem, there exist a set $P_{n,m}$ and  an interval $(a,b)$ such that the intersection $P_{n,m} \cap (a,b)$ is dense in $(a,b)$. 
Choose $(x_k, c_k) \in B^*(a, \frac1k)$ such that $x_k \in P_{n,m} \cap (a,b)$ and $c_k<\frac1m$. Thus,  the sequence $\left((x_k,c_k)\right)$ is convergent to the point $(a,0)$ with respect to the Niemytzki plane. By condition $(2)$ we get $    f_{L\setminus\{(a,0)\}}(x_k,c_k)\geqslant f_{B^*(x_k,1)}(x_k,c_k)>\frac1{n}$;  a contradiction with  $f_{L\setminus\{(a,0)\}}(a,0)=0$. 
  \end{proof}
	
  Put
			\begin{multline*}\textstyle
  g_{B^*(a,r)}(x,y)=
  \begin{cases}
    r-\sqrt{(a-x)^2+(r-y)^2},&\mbox{if }(x,y)\in B^*(a,r)\\& \mbox{and } r\leqslant y;\\
 \left(r-\frac{r|x-a|}{\sqrt{2yr-y^2}}\right){\frac{(r-1)y+r}{r^2},}   &\mbox{if }(x,y)\in B^*(a,r)\\& \mbox{and } 0< y<r;\\
    {1},&\mbox{if }(x,y)=(a,0);\\
    0,&\mbox{for other cases}.
  \end{cases}
\end{multline*} 
    
 The family $$ \mathcal G =  \{g_ {B^*(a,r)} \colon  B^*(a,r) \in \mathcal{B}^*\}$$ is a $\mathcal{B}^*$-stratification, but it cannot be extended to an $\RO(L)$-stratification, i.e., to a family of functions which witnesses that the Niemytzki plane is $\ro$-stratifiable. Indeed, the set $V=\{(x,y)\in L\colon x>0\}$ is a regular open subset of the Niemytzki plane and $(0,0) \notin V$. Suppose that the family $\mathcal G \cup \{g_V\}$ fulfils conditions (1)--(3).  Check that $(\frac{1}{3n}, \frac{1}{6n})\in B^*(0,\frac1n) \cap 
B^*(\frac{1}{3n}, \frac{1}{3n}).$ Since $ B^*(\frac{1}{3n}, \frac{1}{3n}) \subseteq V$, we get $$\textstyle g_V(\frac{1}{3n}, \frac{1}{6n}) \geqslant g_{B^*(\frac{1}{3n}, \frac{1}{3n})}(\frac{1}{3n}, \frac{1}{6n}) >\frac12;$$ this is in conflict with continuity of $g_V$ and the equality $g_V(0,0)=0$.

\end{document}